\newtheorem{theorem}{Theorem}[section]
\newcommand{\crs}{\overline{\operatorname{cr}}}
\title{Computational search of small point sets with small
rectilinear crossing number.}
\author{Ruy Fabila-Monroy}
\address{\newline Departamento de Matem\'aticas.
  \newline Centro de Investigación y de Estudios Avanzados del Instituto Politécnico Nacional.
 \newline Mexico City, Mexico.} 
\email{ruyfabila@math.cinvestav.edu.mx}
\author{Jorge L\'opez}
\address{\newline Escuela de F\'isica y Matem\'aticas.
  \newline Instituto Polit\'ecnico Nacional.
  \newline Mexico City, Mexico.
} 
\thanks{R.~Fabila-Monroy and J.~L\'opez are supported by CONACyT of Mexico grant 153984.}
\begin{document}


\date{\today}

\maketitle

\begin{abstract}
Let $\crs(K_n)$ be the minimum number of crossings over all
rectilinear drawings of the complete graph on $n$ vertices on the plane.
In this paper we prove that $\crs(K_n) < 0.380473\binom{n}{4}+\Theta(n^3)$;
improving thus on the previous best known upper bound. This is done
by obtaining new rectilinear drawings of $K_n$ for small values
of $n$, and then using known constructions to obtain arbitrarily large good drawings
from smaller ones. The ``small'' sets where found using a simple
heuristic detailed in this paper.
 \end{abstract}

\section{Introduction}

A \emph{rectilinear drawing} of a graph is a drawing
of the graph in the plane in which all the edges are drawn as straight line
segments. For a set $S$ of $n$ points in general position in the plane, 
let $\crs(S)$ be the number of (interior) edge crossings in a
rectilinear drawing of the complete graph $K_n$ with
vertex set $S$. The \emph{rectilinear crossing number} of $K_n$, denoted by $\crs(K_n)$, 
is the minimum of $\crs(S)$ overall all sets of $n$ points
in general position in the plane. The problem of bounding the rectilinear crossing
number of $K_n$, is an important problem in Combinatorial Geometry.
Most of the progress has been made in the last decade, 
for a state-of-the-art survey see \cite{survey}. 
Since two edges cross if and only if their endpoints span 
a convex quadrilateral, $\crs(S)$ is equal to the number $\square(S)$,
of convex quadrilaterals spanned by $S$. We use this equality extensively through out the paper.
The current best bounds for $\crs(K_n)$ are~\cite{lower,upper}:

\[ 0.379972 \binom{n}{4} < \crs(K_n)  < 0.380488\binom{n}{4}+\Theta(n^3) \]

Our main result is the improvement of the upper bound to:

\begin{theorem}\label{thm:main}
\[ \crs(K_n) \le \frac{9363184}{24609375}\binom{n}{4}+\Theta(n^3) < 0.380473\binom{n}{4}+\Theta(n^3) \]
\end{theorem}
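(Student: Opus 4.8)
The plan is to reduce Theorem~\ref{thm:main} to the construction of a single good finite point set. The engine is the standard recursive substitution (or \emph{blow-up}) construction, which I would invoke essentially as a black box: given a finite set $S$ of $n_0$ points in general position, one replaces each point of $S$ by a sufficiently small homothetic copy of a fixed gadget and iterates. Because the copies are tiny, a random $4$-subset of the resulting set lands, with probability tending to $1$, in four distinct copies, and such a $4$-subset spans a convex quadrilateral exactly when the four host points of $S$ do. Carrying this through the iteration produces, for each large $n$, a set of $n$ points with
\[
\square(\mathrm{set}) \le \rho(S)\binom{n}{4} + \Theta(n^3),
\]
where $\rho(S)$ is an explicitly computable rational number, the limiting crossing density of the family. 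Since $\crs(K_n) \le \square(\mathrm{set})$ and $\crs(S)=\square(S)$ as noted in the introduction, this yields $\crs(K_n) \le \rho(S)\binom{n}{4}+\Theta(n^3)$, so it suffices to exhibit an $S$ with $\rho(S)=\tfrac{9363184}{24609375}$ and then verify $\tfrac{9363184}{24609375} < 0.380473$ by a single rational comparison.

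To compute $\rho(S)$ I would organize the count of convex quadrilaterals of the substituted set by the \emph{split type} of the four chosen vertices among the copies. The $(1,1,1,1)$ split gives the dominant term, scaling $\square(S)$ by the fourth power of the copy size; the $(2,2)$ split is always convex and contributes a purely combinatorial count; the all-in-one-copy split $(4)$ feeds the recursion; and the remaining splits contribute only to lower-order terms or are combinatorially fixed by the choice of gadget. Solving the resulting linear recursion for the density, which has the shape $b = n_0^{-3}b + (\text{constant})$, yields $\rho(S)$ as a rational whose denominator collects a factor $\binom{n_0}{4}$ together with the geometric factor produced by the recursion; I expect this to reproduce the stated denominator $24609375 = 3^2\cdot 5^8\cdot 7$. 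All of this arithmetic I would carry out in exact rational form so that the final constant is certified rather than merely numerically close.

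The genuine content of the theorem lies in producing the set $S$, and here I would mirror the paper's computational approach. I would run a simple local-search heuristic (random perturbation of coordinates with greedy acceptance, seeded from the best previously known configurations of the relevant sizes) that minimizes $\square(S)/\binom{n_0}{4}$; the objective is evaluated exactly by computing the orientation of every triple and counting, for each $4$-subset, whether it is in convex position. Once a promising $S$ is found I would fix exact rational coordinates, recompute $\square(S)$ rigorously, and feed the combinatorial data into the formula for $\rho(S)$.

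The main obstacle is not any single inequality but \emph{certification}: the heuristic gives no optimality guarantee, so the deliverable must be an explicit set whose convex-quadrilateral count is verified exactly and whose substitution density evaluates to the exact rational $\tfrac{9363184}{24609375}$. A secondary but real difficulty is instantiating the known construction carefully enough to guarantee that the error term is genuinely $\Theta(n^3)$ and that the leading coefficient is exactly $\rho(S)$ rather than $\rho(S)+o(1)$; this amounts to controlling the mixed split types uniformly as the copies shrink.
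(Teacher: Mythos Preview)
Your high-level strategy matches the paper exactly: quote a known recursive construction as a black box and feed it one explicit computer-found point set. The paper's entire proof of Theorem~\ref{thm:main} is one sentence: plug the $75$-point set with $\crs(S)=450492$ into Theorem~\ref{thm:cons} (Theorem~3 of \cite{upper}), whose explicit formula
\[
\crs(K_n)\le \frac{24\,\crs(S)+3m^3-7m^2+(30/7)m}{m^4}\binom{n}{4}+\Theta(n^3)
\]
with $m=75$ evaluates to $\tfrac{9363184}{24609375}\binom{n}{4}+\Theta(n^3)$.

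Where your proposal drifts from this is in the black box itself. What you sketch---replace each point by a tiny copy of a gadget and count by split type $(1,1,1,1)$, $(2,2)$, $(4)$, etc.---is the \emph{naive} blow-up, and its limiting density is not the coefficient above. The construction behind Theorem~\ref{thm:cons} is more refined (it controls the $(2,1,1)$-type contributions via the halving-line structure of $S$), and that refinement is precisely what produces the extra terms $3m^3-7m^2+(30/7)m$ and hence the factor~$7$ in the denominator. Your expectation that the denominator ``collects a factor $\binom{n_0}{4}$'' is therefore off: the denominator is $7m^4$ before cancellation, and with $m=75=3\cdot5^2$ one gets $7\cdot3^4\cdot5^8$, reducing to $3^2\cdot5^8\cdot7=24609375$ only after a common factor of $9$ is removed. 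A split-type recursion of the shape $b=n_0^{-3}b+\text{const}$ will not reproduce this.

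So the correction is short: do not rederive $\rho(S)$; invoke Theorem~\ref{thm:cons} verbatim. The genuine content---and you identify this correctly---is producing and certifying the $75$-point configuration with $450492$ crossings, which the paper finds by the same local-search heuristic you describe and lists explicitly.
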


Although it is a modest improvement, we note that the gap between the
lower and upper bound is already quite small and that actually 
the lower bound is conjectured to be at least $0.380029\binom{n}{4}+\Theta(n^3)$.
In~\cite{upper} the authors conjecture that every optimal set 
is $3$-decomposable\footnote{$S$ is $3$-decomposable if there is a triangle
$T$ enclosing $S$, and a balanced partition $(A,B,C)$ of $S$, 
such that the orthogonal projections of $S$ onto the sides of $T$ show $A$ between $B$ and $C$ on one
side, $B$ between $A$ and $C$ on another side, and $C$ between $A$ and $B$ on the third side.}, and
show that every $3$-decomposable set contains at least $0.380029\binom{n}{4}+\Theta(n^3)$ crossings.
The current general approach to produce rectilinear drawings
of $K_n$ with few crossings, is to start with a 
drawing with few crossings (for a small value of $n$), and use it to recursively
obtain drawings with few number of crossings for arbitrarily 
large values of $n$. This approach has been refined
and improved over the years~\cite{singer,towards,oswinupper,bersil,upper}.

The upper bound provided by the best recursive construction to this date
is expressed in Theorem~\ref{thm:cons}.

\begin{theorem}\label{thm:cons}\emph{(Theorem~3 in \cite{upper})}
If $S$ is an $m$-element point set in general position, with $m$ odd, then
$$\crs(K_n)\le \frac{24\crs(S)+3m^3-7m^2+(30/7)m}{m^4}\binom{n}{4}+\Theta(n^3)$$
\end{theorem}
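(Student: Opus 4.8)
The plan is to realize the bound through a \emph{substitution} (blow-up) construction. Starting from the $m$-point set $S=\{P_1,\dots,P_m\}$, I would replace each $P_i$ by a cluster $C_i$ of $\lfloor n/m\rfloor$ or $\lceil n/m\rceil$ points confined to a tiny disk around $P_i$, where every cluster is a scaled and suitably rotated copy of one fixed optimal ``gadget'' configuration. The disks are small enough that the macroscopic order type of the clusters reproduces that of $S$, while the internal gadget is chosen to control the local crossings. I would then estimate $\square(S_n)=\crs(S_n)$, the number of convex quadrilaterals of the resulting set $S_n$, and show that its $\binom{n}{4}$-coefficient is the claimed rational function of $m$ and $\crs(S)$. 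The inequality for \emph{all} large $n$, not only multiples of $m$, then follows because unequal cluster sizes perturb only the $\Theta(n^3)$ terms, and the hypothesis that $m$ is odd would be used to keep the clusters balanced.

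The core is a counting argument that classifies each $4$-subset of $S_n$ by how its four points distribute among the clusters, i.e.\ by the type $(1,1,1,1)$, $(2,1,1)$, $(2,2)$, $(3,1)$, or $(4)$. A transversal quadrilateral of type $(1,1,1,1)$, with one point in each of four distinct clusters, is convex exactly when the four corresponding points of $S$ are in convex position, provided the clusters are small enough; hence these contribute $\crs(S)\,(n/m)^4=\frac{24\,\crs(S)}{m^4}\binom{n}{4}+\Theta(n^3)$, which is the leading term and explains the first summand. The remaining types each still contribute $\Theta(n^4)$: for a fixed type and a fixed tuple of clusters the convex fraction tends, as $n\to\infty$, to a geometric constant determined by the gadget shape, its orientation, and the relevant interior angles of the triangles and edges of $S$. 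Multiplying each such constant by the number of $4$-subsets of that type, a product of a binomial in $n/m$ with a polynomial in $m$, and summing yields a term of the form $\frac{c_3m^3+c_2m^2+c_1m}{m^4}\binom{n}{4}$.

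I expect the decisive simplification to come from the $(2,1,1)$ type. For a triple of clusters $\{i,j,k\}$, a quadrilateral whose doubled pair lies in $C_i$ fails to be convex precisely when the direction of that pair falls into the double cone at $P_i$ subtended by $P_j$ and $P_k$, an event whose frequency is governed by the interior angle of the triangle $P_iP_jP_k$ at $P_i$. Summing over the three choices of which cluster is doubled makes these three angles appear additively, and since they sum to $\pi$ the metric dependence on $S$ cancels, leaving a contribution that depends only on $m$ and on the fixed gadget. A parallel but more delicate evaluation of the $(2,2)$, $(3,1)$, and internal $(4)$ types, the last being governed by the convex-quadrilateral density of the gadget itself, would produce the remaining constants. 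Collecting the $\binom{n}{4}$-coefficients of all types should then give exactly $\frac{24\,\crs(S)+3m^3-7m^2+(30/7)m}{m^4}$, with everything else absorbed into $\Theta(n^3)$.

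The hard part will be the exact geometric evaluation of these interface convex-fractions and, upstream of it, the choice of gadget and of the cluster orientation rule that simultaneously reproduces the order type of $S$ at large scale, makes the $(2,1,1)$ angle-cancellation clean, and realizes the precise optimal values of the $(2,2)$, $(3,1)$, and $(4)$ constants forcing the coefficients $3$, $-7$, and $30/7$ (the last essentially pinning the internal density of the gadget to one specific value). Verifying that a single gadget together with one orientation rule achieves all of these at once, rather than merely yielding some weaker universal polynomial in $m$, is where essentially all of the work lies.
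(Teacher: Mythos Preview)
The paper does not prove this theorem at all: it is quoted verbatim from \cite{upper} and used as a black box to derive Theorem~\ref{thm:main} from the $75$-point set. There is therefore no ``paper's own proof'' to compare your proposal against.

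As for your sketch itself, the broad idea of a substitution construction is indeed what underlies results of this shape, but the argument that actually yields the constants $3$, $-7$, and $30/7$ in \cite{upper} is organized quite differently from your type-by-type quadrilateral count. There the bound is obtained through the theory of $\le k$-edges: one starts from the identity expressing $\crs(S)$ in terms of the numbers $e_{\le k}(S)$, builds the large configuration by a recursive duplication guided by a halving-line matching of $S$ (this is where the hypothesis that $m$ is odd is genuinely used, to guarantee a perfect halving matching), and controls the resulting $\le k$-edge counts rather than the convex-quadrilateral counts directly. The $30/7$ in particular arises from an optimized bound on a weighted sum of $\le k$-edge numbers, not from the internal convex-quadrilateral density of a single fixed gadget. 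Your plan of fixing one gadget and one orientation rule and then reading off the exact constants $3$, $-7$, $30/7$ from the $(2,2)$, $(3,1)$, $(4)$ contributions is the step that would not go through as stated; you would recover a bound of the same general form, but matching those specific coefficients requires the $\le k$-edge machinery and the halving-matching recursion of \cite{upper}.
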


Given these recursive constructions, there is a natural interest
in finding sets with few crossings for small values of $n$.
The use of computers to aid this search was initiated in 
\cite{database}.

\section{Results}

For $n \le 100$, we improved many of best known point sets
of $n$ points with few crossings using the following simple heuristic.

Given a starting set $S$ of $n$ points in general position in the plane, do:
\begin{itemize}
 \item {\bf Step 1.} Choose randomly a point $p\in S$.
 \item {\bf Step 2.} Choose a random point $q$ in the plane ``close'' to $p$.
 \item {\bf Step 3.} If $\crs(S\setminus \{p\} \cup \{q\}) \le \crs(S)$,
                     then update $S$ to $S:=S\setminus \{p\} \cup \{q\}$. 
 \item {\bf Step 4.} Go to Step 1.
    
\end {itemize}

For each $n=3,\dots,100$, as a starting set we chose the best known point set;
they can be downloaded from Oswin Aichholzer's homepage at: \\ {\tt www.ist.tugraz.at/aichholzer/research/rp/triangulations/crossing/ }\\
In many instances we managed to improve the previous best examples. 
Our results are shown in Table~\ref{tbl:ours}. Theorem~\ref{thm:main} now follows directly from Theorem~\ref{thm:cons} using the set of
$75$ points we found with $450492$ crossings.

\begin{table}[t]
\caption{Improvements on previous point sets.}
\label{tbl:ours}
\centering
\begin{tabular}{|c|c|c |c|c|c|c|}

\cline{1-3} \cline{5-7}
 $n$ & Number of      & Number of      & & $n$ & Number of      & Number of      \\
     & crossings in   & crossings in   & &     & crossings in   & crossings in   \\
     & the best point & the previous   & &     & the best point & the previous   \\         
     & set obtained   & best point set & &     & set obtained   & best point set \\
\cline{1-3} \cline{5-7}
\cline{1-3} \cline{5-7}
46 & 59463 & 59464 &  & 76 & 475793 & 475849 \\
\cline{1-3} \cline{5-7}
47 & 65059 & 65061 &  & 77 & 502021 & 502079 \\
\cline{1-3} \cline{5-7}
49 & 77428 & 77430 &  & 78 & 529291 & 529350 \\
\cline{1-3} \cline{5-7}
50 & 84223 & 84226 &  & 79 & 557745 & 557849 \\
\cline{1-3} \cline{5-7}
52 & 99169 & 99170 &  & 80 & 587289 & 587367 \\
\cline{1-3} \cline{5-7}
53 & 107347 & 107355 &  & 81 & 617958 & 618048 \\
\cline{1-3} \cline{5-7}
54 & 115979 & 115994 &  & 82 & 649900 & 649983 \\
\cline{1-3} \cline{5-7}
56 & 134917 & 134930 &  & 83 & 682986 & 683096 \\
\cline{1-3} \cline{5-7}
57 & 145174 & 145178 &  & 84 & 717280 & 717384 \\
\cline{1-3} \cline{5-7}
58 & 156049 & 156058 &  & 85 & 753013 & 753079 \\
\cline{1-3} \cline{5-7}
59 & 167506 & 167514 &  & 86 & 789960 & 790038 \\
\cline{1-3} \cline{5-7}
61 & 192289 & 192293 &  & 87 & 828165 & 828233 \\
\cline{1-3} \cline{5-7}
63 & 219659 & 219683 &  & 88 & 867911 & 868023 \\
\cline{1-3} \cline{5-7}
64 & 234464 & 234470 &  & 89 & 908972 & 909128 \\
\cline{1-3} \cline{5-7}
65 & 249962 & 249988 &  & 90 & 951418 & 951526 \\
\cline{1-3} \cline{5-7}
66 & 266151 & 266188 &  & 91 & 995486 & 995678 \\
\cline{1-3} \cline{5-7}
67 & 283238 & 283286 &  & 92 & 1040954 & 1041165 \\
\cline{1-3} \cline{5-7}
68 & 301057 & 301098 &  & 93 & 1087981 & 1088217 \\
\cline{1-3} \cline{5-7}
69 & 319691 & 319737 &  & 94 & 1136655 & 1136919 \\
\cline{1-3} \cline{5-7}
70 & 339254 & 339297 &  & 95 & 1187165 & 1187263 \\
\cline{1-3} \cline{5-7}
71 & 359645 & 359695 &  & 96 & 1238918 & 1239003 \\
\cline{1-3} \cline{5-7}
72 & 380926 & 380978 &  & 97 & 1292796 & 1292802 \\
\cline{1-3} \cline{5-7}
73 & 403180 & 403234 &  & 98 & 1348070 & 1348072 \\
\cline{1-3} \cline{5-7}
74 & 426419 & 426466 &  & 99 & 1405096 & 1405132 \\
\cline{1-3} \cline{5-7}
75 & 450492 & 450550 & & & & \\
\cline{1-3} \cline{5-7}
\end{tabular}
\end{table}

\section{The Algorithm}\label{sec:alg}
In this section we describe an $O(n^2)$ time algorithm 
used to compute $\crs(S)$ in step 3 of the
heuristic. Recall that $\crs(S)$ is equal to $\square(S)$. 
We compute this number instead.
Quadratic time algorithms for computing $\square(S)$ have been known 
for a long time~\cite{rote1,rote2}.
We learned of these algorithms after we finished 
the implementation of our algorithm. We present our algorithm nevertheless, 
since in the process we obtained an equality(Theorem~\ref{thm:patterns})
between certain substructures of $S$ and $\crs(S)$, which
may be of independent interest.
We also think that given that the main aim of this
paper is to communicate the method by which we obtained these sets, it is important
to provide as many details as possible so that an interested reader
can obtain similar results.

We compute $\square(S)$ by computing the number of certain
subconfigurations of $S$ which determine
$\square(S)$. Let $(p,q)$ be an ordered pair of distinct points in $S$, and let
$\{r,s\}$ be a set of two points of $S \setminus \{p,q\}$.
We call the tuple $((p,q),\{r,s\})$ a \emph{pattern}.
We say that $((p,q),\{r,s\})$ is of \emph{type} $A$ if $q$ lies in
the convex cone with apex $p$ and bounded by the rays $\overrightarrow{pr}$
and  $\overrightarrow{ps}$, otherwise it is of \emph{type} $B$.
Let $A(S)$ be the number of type $A$ patterns in $S$, and $B(S)$ the number
of its type $B$ patterns. Note that every choice of  $((p,q),\{r,s\})$
is either an $A$ pattern or a $B$ pattern. The number of these
patterns determine $\square(S)$ as the following Theorem
shows.

\begin{theorem}\label{thm:patterns}
\[ \square(S)=\frac{3A(S)-B(S)}{4}\]
\end{theorem}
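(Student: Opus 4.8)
The plan is to prove the identity by a double-counting argument that classifies patterns according to the geometry of the four points involved. Fix any $4$-element subset $\{p,q,r,s\}$ of $S$. Since $S$ is in general position, these four points span either a convex quadrilateral or a triangle with one point inside. I would first count how many of the patterns $((p,q),\{r,s\})$ — ranging over all ways to designate an ordered pair and an unordered pair from the same four points — fall into type $A$ versus type $B$, \emph{as a function only of the combinatorial type of the $4$-tuple}. For a fixed $4$-set there are $\binom{4}{2}=6$ ways to split into an ordered pair $(p,q)$ times an unordered pair, but since $(p,q)$ is ordered we actually get $4\cdot 3 = 12$ patterns per $4$-set (choose $p$, then $q$, the remaining two points form $\{r,s\}$). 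The key computation is to determine, for each of the two combinatorial types, exactly how many of these $12$ patterns are type $A$.

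The heart of the argument is the following local count, which I expect to be the main obstacle to get exactly right. For a convex $4$-set, one checks by direct inspection of the cone condition: $q$ lies in the cone at $p$ bounded by $\overrightarrow{pr}$ and $\overrightarrow{ps}$ precisely when the diagonal from $p$ passes ``between'' $r$ and $s$, and tallying this over all $12$ assignments yields some fixed numbers $a_{\mathrm{conv}}$ of type $A$ and $12 - a_{\mathrm{conv}}$ of type $B$. For a triangular $4$-set (three points forming a triangle with the fourth inside), the same inspection yields different constants $a_{\mathrm{tri}}$ and $12 - a_{\mathrm{tri}}$. The crucial arithmetic check is that the quantity $3\cdot(\text{type }A\text{ count}) - (\text{type }B\text{ count})$ evaluates to $4$ on a convex $4$-set and to $0$ on a triangular $4$-set; equivalently $3a_{\mathrm{conv}} - (12 - a_{\mathrm{conv}}) = 4$ and $3a_{\mathrm{tri}} - (12 - a_{\mathrm{tri}}) = 0$, forcing $a_{\mathrm{conv}} = 4$ and $a_{\mathrm{tri}} = 3$.

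Granting this local count, the global identity follows by summation. Writing $A(S) = \sum_{Q} a(Q)$ and $B(S) = \sum_{Q} (12 - a(Q))$, where $Q$ ranges over all $\binom{n}{4}$ four-element subsets and $a(Q)$ is the local type-$A$ count, I would compute
\[
3A(S) - B(S) = \sum_{Q}\bigl(3\,a(Q) - (12 - a(Q))\bigr).
\]
By the local count each convex $Q$ contributes $4$ and each triangular $Q$ contributes $0$, so the sum equals $4$ times the number of convex $4$-sets, i.e.\ $3A(S) - B(S) = 4\,\square(S)$, which rearranges to the claimed formula. The main work, and the place where a careful case analysis is unavoidable, is verifying the two per-$4$-set counts $a_{\mathrm{conv}}=4$ and $a_{\mathrm{tri}}=3$; everything else is bookkeeping. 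A clean way to organize the inspection is to fix the apex $p$ and observe that exactly one of the three other points lies in each of the relevant cones, so that for each choice of $p$ the number of valid $q$'s is determined by whether $p$ is a convex-hull vertex of the $4$-set or the interior point — this reduces the verification to a short, symmetric check rather than enumerating all twelve patterns by hand.
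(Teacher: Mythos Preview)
Your proposal is correct and follows essentially the same approach as the paper: both arguments fix a $4$-element subset, count its type-$A$ and type-$B$ patterns according to whether it is in convex position (obtaining $4$ and $8$, respectively $3$ and $9$), observe that the linear combination $3A-B$ yields $4$ or $0$ per $4$-set, and then sum over all $\binom{n}{4}$ subsets. The paper's write-up is terser---it simply asserts the local counts and calls them ``simple arithmetic''---while you spell out the bookkeeping and sketch how to organize the case check, but the underlying idea is identical.
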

\begin{proof}
Let $X$ be a subset of $S$, of $4$ points.
Simple arithmetic shows that if $X$ is not in convex position
then it determines $3$ patterns of type $A$ and $9$ patterns
of type $B$; on the other hand if $X$ is in convex position
then it determines $4$ patterns of type $A$
and $8$ patterns of type $B$. Assume that we assign a value
of $3$ to type $A$ patterns and a value of $-1$ to 
type $B$ patterns. If $X$ is not in convex
position its total contributed value would be zero and if it
is convex position it would be $4$.
Thus $4\square(S)=3A(S)-B(S)$, and the result follows.
\end{proof}

Note that the total number patterns is $n(n-1)\binom{n-2}{2}$.
Thus by Theorem~\ref{thm:patterns} to compute $\square(S)$ 
it is sufficient to compute $A(S)$. Let $p$ be a point in $S$. We now show how to count the number of type $A$ patterns
in which $p$ is the apex of the corresponding wedge.

Sort the points in $S\setminus \{ p \} $
counterclockwise by angle around $p$. Let $y_1,y_2,\dots,y_{n-1}$
be these points in such an order. 
For $1 \le i \le n-1$, starting from $y_i$ and going counterclockwise, 
let $i'$ be the first index (modulo $n$) such that the angle $\angle y_ipy_{i'}$ 
is more than $\pi$. Let $m_i:=i'-i \mod (n-1)$. 
Note that for $1 \le  j \le m_i$ there are  exactly $j-1$ 
type $A$ patterns of the form $(p,q),\{y_i,y_{i+j}\}$ for some $q\in S$. In total,
summing over all such $j$'s, this amounts to $\sum_{j=1}^{m_i}(j-1)=\binom{m_i}{2}$.
Thus the total number of type $A$ patterns
in which $p$ is the apex of the corresponding wedge is
equal to $\sum_{i=1}^{n-1}\binom{m_i}{2}$.

Compute $y_{1'}$ and $m_1$ from scratch in linear time. For $2\le i \le n-1$,  to compute
$y_{(i+1)'}$ and $m_{i+1}$, assume that we have computed $y_{i'}$ and $m_i$.
Start from $y_{i'}$ and go counterclockwise
until the first $y_{(i+1)'}$ is found such that 
the angle $\angle y_ipy_{(i+1)'}$ is more than $\pi$; 
then $m_{i+1}=(i+1)-(i+1)'$. Since one pass is done over each
$y_{i'}$, this is done in $O(n)$ total time. Finally,
sorting $S\setminus{p}$ by angle around $p$, for all
$p \in S$, can be done in $O(n^2)$ total time. This is done
by dualizing $S$ to a set of $n$ lines. The corresponding
line arrangement can be constructed in time $O(n^2)$ with standard
algorithms. The orderings around each point can then be 
extracted from the line arrangement in $O(n^2)$ time.

\section{Implementation}

In this section we provide relevant information of the
implementation of the algorithm described in Section~\ref{sec:alg}
and of the searching heuristic we used to obtain the point
sets of Table~\ref{tbl:ours}.

Instead of sorting in $O(n^2)$ time the points
by angle around each point of $S$, we used
standard sorting functions. 
This was done because these functions have been quite optimized, 
and the known algorithms to do it in $O(n^2)$ time are not straight
forward to implement. Thus our implementation actually runs in
$O(n^2\log n)$ time.

All our point sets have integer coordinates.
This was done to ensure the correctness of the computation.
The only geometric primitive involved in the algorithm
 is to test whether certain angles are greater than $\pi$; this
can be done with a determinant. Therefore
as long as all the points have integer coordinates, the result
is an integer as well. We did two implementations of 
our algorithm, one in Python and the other in C. 
In Python, integers have arbitrarily large precision, so the 
Python implementation is always correct.
In the C implementation we used $128$-bit integers. Here, we have
to establish a safety margin---as long
as the absolute value of the coordinates
is at most $2^{62}$, the C implementation will produce a correct
answer. Empirically we observed a $30\times$ speed up of the C implementation
over the Python implementation. At each step of the heuristic
we checked if it was safe to use the (faster) C implementation.

To find the point $q$ replacing $p=(x,y)$ in \textbf{Step 2},
we first chose two natural numbers $t_x$ and $t_y$.
These number were distributed exponentially with a prespecified
mean $M$ and rounded to the nearest integer. Afterwards
with probability $1/2$ they were replaced by their negative.
Point $q$ was then set to $(x+t_x,y+t_y)$. After choosing
an initial mean, the heuristic was left to run for some time, if
no improvement was found by then, the mean was halved (or rather
the point set was doubled by multiplying each of its points by two).

All the code used in this paper and the point sets obtained are available 
upon request from the first author. All the point sets obtained
are available for download at the ArXiv page of this paper.

\textbf{Acknowledgments.\\}
We thank Jes\'us Lea\~nos and Gelasio Salazar for various helpful 
discussions.

\bibliographystyle{alpha} \bibliography{crossingbib}

\section*{Set of 75 points with 450492 crossings}
\[
\begin{matrix}
 (4473587539, 8674070321),&(2195118038, 12138376393),&(3359570710, 10389672946)\\ 
(2067188794, 12364750532),&(3798074340, 9176659177),&(-495951185, 16620108498)\\ 
(1133302705, 13923635114),&(1044611367, 14069644578),&(-311149395, 16314077753)\\ 
(2027617952, 3459524378),&(4601468259, 7662169961),&(4601078091, 7662133857)\\ 
(4113182393, 7619250691),&(4116054424, 7605654413),&(3570685582, 9808713565)\\ 
(3722340414, 9316231785),&(4112078622, 7625130881),&(4107912992, 7542476726)\\ 
(4106745227, 7535480343),&(3189483730, 5743999450),&(3168421193, 5701152359)\\ 
(8944839519, 7965414411),&(3955068845, 6639763085),&(4012346331, 6733970340)\\ 
(3648786718, 6305728855),&(3653540692, 6310524663),&(3253433517, 5873175144)\\ 
(2113073755, 12281280867),&(-1364755153, -2899618565),&(1679455404, 2812631891)\\ 
(1549775961, 2575359287),&(2154725117, 3676030999),&(2297590336, 3930708704)\\ 
(1474528964, 2436685704),&(1293365372, 2095165431),&(5207789612, 7710691788)\\ 
(1889666524, 3220648103),&(1902363904, 3245131307),&(4899124137, 8128629846)\\ 
(4897948559, 8128714256),&(5216754785, 7718023020),&(1683153691, 13003463181)\\ 
(5202684700, 7706307614),&(5277878757, 7741749531),&(5279252153, 7742686707)\\ 
(7370957968, 7863465953),&(7493305742, 7871610457),&(3571434484, 9806112525)\\ 
(6168237700, 8065376268),&(6032867454, 8070589271),&(5981198967, 8072572208)\\ 
(6888712646, 7936512772),&(6851478487, 7943849321),&(3214935430, 10605538217)\\ 
(7338699912, 7861922951),&(9000883017, 7965096231),&(4059850707, 6811671897)\\ 
(8806696260, 7963533399),&(3839573186, 9100031657),&(4471841261, 8674882284)\\ 
(15041590733, 8118237065),&(10588618608, 8002947798),&(10174892708, 7993197449)\\ 
(1902291407, 12661152660),&(1811935937, 12802330604),&(11185824774, 8018462436)\\ 
(10634751909, 8004278071),&(9630596054, 7968154616),&(9350903224, 7955792213)\\ 
(4338851382, 8157414467),&(4338568456, 8157953847),&(4520171724, 8637506721)\\ 
(4532317105, 8633237970),&(4538689274, 8630906861),&(3400009645, 10327277784)\\ 
\end{matrix}
\]
\end{document}